\theoremstyle{plain}
\newtheorem{theorem}{Theorem}
\newtheorem{corollary}[theorem]{Corollary}
\newtheorem{lemma}[theorem]{Lemma}
\newtheorem{conj}[theorem]{Conjecture}
\theoremstyle{definition}
\newtheorem{example}[theorem]{Example}
\newtheorem{remark}[theorem]{Remark}
\numberwithin{theorem}{section}
\newcommand{\F}{{\mathbb F}}
\newcommand{\Q}{{\mathbb Q}}
\newcommand{\Z}{{\mathbb Z}}
\renewcommand{\int}{\operatorname{int}}
\renewcommand{\wp}{{\mathfrak p}}
\newcommand{\en}[1]{{\rm End}(#1)}
\newcommand{\He}[2]{\mathcal{H}_{#1,#2}}
\newcommand{\HT}[2]{\widetilde{\mathcal{H}}_{#1,#2}}
\newcommand{\Hass}[2]{\mathcal{H}_{#1,#2}^{ss}}
\newcommand{\HassT}[2]{\widetilde{\mathcal{H}}_{#1,#2}^{ss}}
\newcommand{\TT}{\mathcal{T}}
\newcommand{\GL}{\mathbf{GL}}
\newcommand{\SL}{\mathbf{SL}}
\newcommand{\owp}{\widetilde{\wp}}
\newcommand{\DMF}[2]{M_{#1, #2}}
\newcommand{\DMFmodT}[2]{\widetilde{M}_{#1, #2}}
\newcommand{\ov}[1]{\widetilde{#1}}
\journal{arXiv.org}
\begin{document}

\begin{frontmatter}


\title{On the action of Hecke operators on Drinfeld modular forms}

\author[kj]{Kirti Joshi} 

\author[ap]{Aleksandar Petrov}

\address[kj]{
Department of Mathematics, University of Arizona, 617 N Santa Rita, Tucson
85721-0089, USA.}

\address[ap]{ Science Program, Texas A \& M University at Qatar, Doha 23874, Qatar.}


\begin{abstract}In this paper we determine the explicit structure of the semisimple part of the Hecke algebra that acts on Drinfeld modular forms of full level modulo~$T$. We show that modulo $T$ the Hecke algebra has a non-zero semisimple part. In contrast, a well-known theorem of Serre asserts that for classical modular forms the action of $\TT_\ell$ for any odd prime $\ell$ is nilpotent modulo $2$. After proving the result for Drinfeld modular forms modulo $T$, we use computations of the Hecke action modulo~$T$ to show that certain powers of the Drinfeld modular form $h$ cannot be eigenforms. Finally, we conjecture that the Hecke algebra that acts on Drinfeld modular forms of full level is not smooth for large weight, which again contrasts the classical situation. 
\end{abstract}


\begin{keyword} Drinfeld modular forms \sep Reduction of Drinfeld modular forms modulo $T$

\end{keyword}

\end{frontmatter}


\section{Introduction}

 The nilpotence of the Hecke algebra modulo $2$ for the full modular group $\SL_2(\Z)$ was first observed by Serre (\cite[\S 5]{Serre_Valeurs}). The result has many important arithmetic applications (see Sec.~2.7 and Ch.~5 of \cite{ono2004web}). Recently, Nicolas and Serre (\cite{SerreNicolas_nilpotence}, \cite{SerreNicolas_Hecke_algebra}) have determined the order of nilpotence and the structure of the Hecke algebra modulo $2$ for the full modular group. Ono and Taguchi (\cite{Ono_Taguchi}) have studied other spaces of modular forms for which the Hecke action is nilpotent modulo $2$ and have given consequences of the nilpotence for quadratic forms, partition functions and central values of twisted modular $L$-functions. The main goal of the present work is to study the structure of the Hecke algebra that acts on Drinfeld modular forms for $\GL_2(\F_q[T])$ and their reductions modulo $T$. Unlike the case of classical modular forms, the Hecke eigenvalues are not directly related to the `Fourier' coefficients of Drinfeld modular forms. This makes the analysis of the Hecke algebra even more difficult and the properties of the Hecke algebra remain somewhat of a mystery. Our approach uses Serre derivatives of Drinfeld modular forms. We completely describe the semisimple part of the action of $\TT_\wp \mod{T}$, where $T \neq \wp \in \F_q[T]$, $\wp$ irreducible.  This is done in Theorem \ref{hecke_algebra_mod_T}. The algebra of Drinfeld modular forms modulo any degree one prime is freely generated over $\F_q$ by the reduction of the normalized cuspidal Drinfeld modular form of the smallest weight (see \cite[(12.9)]{Gek}), which parallels the classical case where the algebra of modular forms modulo $2$ is freely generated over $\F_2$ by the reduction of the classical delta function. This suggests that degree one primes play a role analogous to the prime $2$. In contrast with the result of Serre, we show that $\TT_\wp \mod{T}$ has a non-trivial semisimple part for all $\wp \neq T$. The result also holds with appropriate modifications for any degree one prime (see Remark~\ref{other_primes} below).
 
 Because the Hecke action modulo $T$ is a reduction of the Hecke action on Drinfeld modular forms, explicit calculations of the Hecke action modulo~$T$ allow us to prove that certain powers of $h$ (the normalized Drinfeld cuspidal form of smallest weight) cannot be eigenforms in~Section~\ref{no_reduction_mod_T}. Classically powers of eigenforms are never eigenforms, but since the `Fourier' coefficients of Drinfeld eigenforms are not directly related with the eigenvalues this is no longer the case for Drinfeld modular forms and it is interesting to determine which powers of a given eigenform remain eigenforms.

 Finally, we conjecture (see Conjecture~\ref{main_conjecture})  that the Hecke algebra is not smooth over the rational function field. Classically the Hecke algebra that acts on the space of modular forms of full level is \'{e}tale (and therefore smooth) over $\Q$, therefore our conjecture underscores another difference between the structure of the Hecke algebra in the case of classical modular forms and in the case of Drinfeld modular forms.

\section{Notation}

Let $p$ be a prime number. Let $q = p^r$, with $r$ a positive integer. Let $A$ be the polynomial ring $\F_q[T]$, $A_+$ be the set of monic polynomials in $A$, $K$ the fraction field of $A$. Let $\phi$ denote the Carlitz module, i.e., the Drinfeld module of rank one, uniquely defined by requiring $\phi_T(z) = T z + z^q$. Throughout the article $\tilde{\pi}$ will denote a fixed choice of a period of the Carlitz module. The reader can find more about the Carlitz theory in \cite[Ch.2]{Tha}.

Let $\DMF{k}{m}$ denote the set of Drinfeld modular forms of weight $k$, type $m$, for $\GL_2 (A)$ over $K$ (see \cite[\S 5]{Gek} or \cite{Goss_mod_forms}). Let $t := t(z) = e_{\tilde{\pi} A} (\tilde{\pi} z)^{-1}$ be the usual uniformizer at `infinity', and let $t_a := t(az)$. The function $t_a$, for $a \in A_+$, can be expanded into a $t$-series with coefficients in $A$ (see \cite[(7.3)]{Gek}):
\begin{equation} \label{formula_for_ta}
 t_a = \frac{t^{q^{\deg_T {(a)}}}}{1 + \cdots} = t^{q^{\deg_T{(a)}}} + \text{higher order terms in $t$} \in A[[t]].
\end{equation}
Every element $f$ of $M_{k, m}$ has a $t$-expansion:
\[
	f = \sum_{n = 0}^\infty a_n t^n , \qquad \qquad a_n \in K,
\] which is well-defined for $z$ close to `infinity' and such an expansion determines $f$ uniquely. Let
\[
	g := 1 - (T^q - T) \sum_{a \in A_+} t_a^{q-1}, \qquad \qquad h := \sum_{a \in A_+} a^q t_a.
\]
We have defined $g$ and $h$ not by giving their $t$-expansions, but by giving their $A$-expansions (see \cite{Pet} for details). Most Drinfeld modular forms do not appear to have $A$-expansions, but the ones that do have the advantage that one can compute the Hecke action on them in an easy manner (for examples see \eqref{g_and_h_are_eigenforms} below).  

The algebra $\DMF{k}{m}$ is generated by $g$ and $h$  (\cite[(6.5) \& (9.1)]{Gek}). More precisely, 
\begin{equation} \label{basis_for_M_k_m}
	\DMF{k}{m} = K \langle g^i h^j : k = (q-1) i + (q + 1) j, \  j \equiv m \mod{(q-1)} \rangle.
\end{equation} 

Let
\[
	M := \bigoplus_{k, m} \DMF{k}{m}.
\]
Then
\[
	M = K [g, h].
\]
Let $M_T$ denote the subalgebra of $M$ which consists of Drinfeld modular forms that have $t$-expansion coefficients that have denominators relatively prime to $T$. Such forms can be reduced modulo~$T$ by reducing their coefficients modulo~$T$.

 Let
\[
	\DMFmodT{k}{m} := \{ \widetilde{f} \in \F_q[[t]] : \exists f \in M_{k, m} \cap M_T  \text{ s.t. } f \equiv \widetilde{f} \mod{T} \}.
\]  
We note that $\widetilde{g} = 1$, while $\widetilde{h} = t + \text{higher order terms in $t$}$.

If we set
\[
	\mu :=  \left \lfloor \frac{k - m(q+1)}{q^2 - 1} \right \rfloor,
\]
then \eqref{basis_for_M_k_m} and $\widetilde{g} = 1$ show that
\begin{equation} \label{basis_for_M_k_m_mod_T}
	\DMFmodT{k}{m} = \F_q \left \langle \ov{h}^m, \ov{h}^{m + (q-1)}, \ldots, \ov{h}^{m + \mu(q-1)} \right \rangle.
\end{equation}

Let
\[
	\widetilde{M} := \bigoplus_{k, m} \DMFmodT{k}{m}.
\]
We see that $\widetilde{M} = \F_q [\ov{h}]$.

The `false' Eisenstein series of Gekeler (\cite[(8.2)]{Gek}) is defined by
\[
	E := \sum_{a \in A_+} a t_a.
\]
The function $E$ is not a Drinfeld modular form, but it is quasi-modular and is closely related with differential properties of Drinfeld modular forms. 

From the definitions of $E$ and $h$ it is immediate that 
\begin{equation} \label{E_equals_h_mod_T}
E \equiv h \mod{T}, 
\end{equation}
and therefore	$\widetilde{M} = \F_q  [\ov{E}  ]$.

Let $\wp$ be a prime ideal of $A$ different from $(T)$. By abuse of notation, we will also denote by $\wp$ the monic irreducible that generates the ideal $\wp$. Let $f \in \DMF{k}{m}$. By definition (\cite[\S~1.8]{Goss_mod_forms}) the $\wp^\text{th}$ Hecke operator of weight $k$, $\TT_{\wp, k}$, acts on $f$ according to
\begin{equation} \label{definition_of_the_Hecke_action}
	\TT_{\wp, k} f := \wp^k f (\wp z) + \sum_{\beta \in S_\wp} f \left ( \frac{z + \beta}{\wp} \right),
\end{equation}
where $S_\wp := \{ \beta \in A: \deg_T(\beta) < \deg_T (\wp) \}$. 

The corresponding action of $\TT_{\wp, k}$ on a $t$-expansion is (see \cite[(7.3)]{Gek}):
\begin{equation} \label{action_on_t_exp}
 \TT_{\wp, k} \left ( \sum_{n = 0}^\infty a_n t^n \right) = \wp^k \sum_{n = 0}^\infty a_n t_\wp^n + \sum_{n=0}^\infty a_n G_{n, \wp} (\wp t),
\end{equation}
where $G_{n, \wp} (X)$ is the $n$-th Goss polynomial of the finite lattice formed by the $\wp$-torsion of the Carlitz module. The formulas are also valid when $\wp = T$. 

Since the coefficients of $G_{n, \wp} (X)$ are in $K$ this shows that
\[
	\TT_{\wp, k} \DMF{k}{m} \subset \DMF{k}{m}.
\]

We want to define the Hecke action on $\DMFmodT{k}{m}$ for primes different from $(T)$ by formula \eqref{action_on_t_exp}. We need to check that \eqref{action_on_t_exp} does not introduce denominators divisible by~$T$, so that reduction is well-defined. As we have already remarked, the $t$-expansion of $t_\wp$ has coefficients in $A$, therefore it can be reduced modulo~$T$. It remains to check that for $\wp \neq T$, $G_{n, \wp}(X)$ has coefficients with denominators relatively prime to $T$. To that end, we note that the normalized exponential function of the lattice of $\wp$-torsion of the Carlitz module, $e_\wp(z)$, and the Carlitz module at $\wp$, $\phi_\wp(z)$, have the same zeroes. Since $e_\wp(z) = z + \cdots$, while $\phi_\wp(z) = \wp z + \cdots$, we have
\[
	e_\wp(z) = \frac{1}{\wp} \phi_\wp (z).
\]
Because $\phi_\wp(z)$ has coefficients in $A$, we conclude that $e_\wp(z)$ has coefficients with denominators that are relatively prime to $T$, provided that $\wp \neq T$. According to \cite[(3.3)]{Gek} the polynomials $G_{n, \wp}(X)$ can be inductively computed from the coefficients of $e_\wp(z)$ without introducing denominators. Therefore $G_{n, \wp}(X)$ has coefficients that have no powers of $T$ in its denominators and can be reduced modulo $T$.

We conclude that if $\wp \neq T$, then
\[
	\TT_{\wp, k} (\DMF{k}{m} \cap M_T) \subset \DMF{k}{m} \cap M_T,
\]
where $\TT_{\wp, k}$ is defined by formula \eqref{action_on_t_exp}.

For future reference we give explicit formula for $e_\wp(z)$ and $G_{n+1, \wp} (X)$ based on the coefficients of $\wp$. Let $\wp = \alpha_0 + \alpha_1 T + \cdots + T^d$, $\alpha_i \in \F_q$. Since $\phi_T (z) \equiv z^q \mod{T}$, it follows that
\[
	\phi_\wp(z) \equiv \sum_{n = 0}^d \alpha_n z^{q^n} \mod{T}.
\]
Hence
\begin{equation} \label{exp_mod_T}
        e_\wp(z) \equiv  \frac{1}{\owp} \sum_{n = 0}^d \alpha_n z^{q^n} \equiv \sum_{n =0}^{d} \frac{\alpha_n}{\alpha_0} z^{q^n} \mod{T}.
\end{equation}
Let $\underline{i} = (i_0, \ldots, i_s)$ run over the set of $(s+1)$-tuples ($s$ arbitrary) satisfying $i_0 + \cdots + i_s = m$, $i_0 + i_1 q + \cdots + i_s q^s = n$. According to \cite[(3.8)]{Gek}  formula \eqref{exp_mod_T} shows that
\begin{equation} \label{Goss_poly_mod_T}
	G_{n+1, \wp} (X) \equiv \sum_{m \leq n} \left ( \sum_{\underline{i}} \binom{m}{i_0, \ldots, i_s} \left ( \frac{\alpha_1}{\alpha_0} \right)^{i_1} \cdots \left ( \frac{\alpha_s}{\alpha_0} \right)^{i_s}  \right) X^{m+1} \mod{T},
\end{equation}
where $\binom{m}{i_0, \ldots, i_s}$ is the multinomial coefficient $m!/(i_0! \cdots i_s!)$. 

Corollary 3.1.13 from \cite{Pet} shows that for~$1 \leq j \leq q$:
\begin{equation} \label{g_and_h_are_eigenforms}
	\TT_{\wp, q-1} g = \wp^{q-1} g, \qquad \TT_{\wp, j (q+1)} h^j = \wp^j h^j.
\end{equation}

 Let $\He{k}{m}\subset\en{\DMF{k}{m}}$ be the subalgebra generated by all the Hecke operators over $K$ (see \cite[Df~3.1]{Goss_eisenstein} for the definition of $\TT_\mathfrak{m}$ when $\mathfrak{m}$ is not a prime ideal). The reader should note that just like the classical case $\TT_{\mathfrak{m}} \TT_{\mathfrak{n}}$ for relatively prime $\mathfrak{m}$ and $\mathfrak{n}$, but unlike the classical case $\TT_{\wp^N} = \TT_{\wp}^N$ for all $\wp$ and all positive integers $N$ (see~\cite[Pr~3.3]{Goss_eisenstein}). This allows us to focus our attention on $\TT_\wp$ where $\wp$ is prime. Let $\HT{k}{m}$ be the subalgebra of $\en{\DMFmodT{k}{m}}$ generated by the Hecke operators away from~$T$. Let $\Hass{k}{m}$ be the subalgebra of $\He{k}{m}$ generated by operators which are absolutely semisimple. Let $\HassT{k}{m}$ be the subalgebra of $\HT{k}{m}$ generated by absolutely semisimple elements. The algebra $\He{k}{m}$ is a finite-dimensional $K$-algebra while $\HT{k}{m}$ is a finite-dimensional $\F_q$-algebra. We note that $\Hass{k}{m}$ is a product of finite separable extensions (see \cite[Chap V, page 34]{BourbakiAlgebra}). We are interested in properties of $\Hass{k}{m}$ and $\HassT{k}{m}$.

\section{Hecke Algebra modulo $T$}

In this section we determine the structure of $\HassT{k}{m}$ completely (Theorem~\ref{hecke_algebra_mod_T} below). We will make use of the Serre derivatives from \cite[(8.5)]{Gek}, which we now recall. 

Define
\[
	\Theta := \frac{1}{\tilde{\pi}} \frac{d}{dz} = -t^2 \frac{d}{dt}.
\]
The operator $\Theta$ does not preserve $M$, but the $k^\text{th}$ Serre derivative
\[
	\partial_k := \Theta + k E
\]
does. One can show (see \cite[(8.5)]{Gek}) that $\partial_k : M_{k, m} \to M_{k+2, m+1}$ and that for $k = k_1 + k_2$, $f_1, f_2$ Drinfeld modular forms of weights $k_1, k_2$, respectively, we get $\partial_k (f_1 f_2) = \partial_{k_1} (f_1) f_2 + f_1 \partial_{k_2} (f_2)$. 

We have (paragraph following \cite[(8.7)]{Gek})
\[
	\partial_{q+1} h = 0,
\]
which, together with \eqref{E_equals_h_mod_T}, shows that $\Theta$ preserves $\widetilde{M}$. Indeed $\partial_{q+1} h = 0$ implies that
\begin{equation} \label{differentiate_h}
	\Theta h \equiv - h^2 \mod{T}.
\end{equation}

As in the previous section, let $\wp$ stand for either a prime ideal of $A$ different from~$(T)$ or for the monic generator of such an ideal. The reduction of $\wp$ modulo $T$, i.e., the constant term of $\wp$, will be denoted by $\owp$.
By using \eqref{definition_of_the_Hecke_action}, we get
\[
	\frac{d}{d z} \TT_{\wp, k} f = \wp^{k + 1} f'(\wp z) + \frac{1}{\wp} \sum_{\beta \in S_\wp} f' \left ( \frac{z + \beta}{\wp} \right),
\]
which in terms of operators shows that
\begin{equation} \label{differentiate_hecke}
 \wp \cdot \Theta \TT_{\wp, k} = \TT_{\wp, k + 2} \Theta.
\end{equation}

\begin{remark}
We will generally suppress the weight $k$ from the notation for the $\wp^{\text{th}}$ Hecke operator, since the weight can be read off from the form that the Hecke operator is acting on. We want to remark that modulo $T$ equation \eqref{differentiate_hecke} becomes
\begin{equation} \label{differentiate_hecke_mod_T}
\owp \cdot \Theta \TT_{\wp, k} \equiv \TT_{\wp, k + 2 + s(q-1)} \Theta \mod{T},
\end{equation}
where $s$ can be chosen to be any non-negative integer. 
\end{remark}

\begin{theorem} \label{main}
Let $\wp$ be a prime different from $T$. Let $n$ be a non-negative integer. Then
\begin{equation} \label{main_equation}
	\TT_\wp h^n \equiv \owp^n h^n + \text{lower order terms in $h$} \mod{T}.
\end{equation}
\end{theorem}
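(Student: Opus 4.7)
The plan is to prove the claim by induction on $n$, with the main tool being the mod-$T$ commutation between $\TT_\wp$ and the Serre derivative $\Theta$.

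\textbf{Base cases $0 \leq n \leq q$.} Formula \eqref{g_and_h_are_eigenforms} gives $\TT_\wp h^n = \wp^n h^n$ as an identity of Drinfeld modular forms (with $\TT_\wp \cdot 1 = 1$ in characteristic $p$ for $n=0$). Reducing modulo $T$ yields $\TT_\wp h^n \equiv \owp^n h^n \pmod{T}$, with no lower-order contribution at all.

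\textbf{Inductive step for $n-1\not\equiv 0 \pmod{p}$.} Suppose the claim holds for all $n' < n$. Applying Leibniz to $\partial_{q+1} h = 0$ gives $\partial_{(n-1)(q+1)} h^{n-1} = 0$, so
\[
\Theta h^{n-1} \;=\; -(n-1)(q+1)\,E h^{n-1} \;\equiv\; -(n-1)\,h^n \pmod{T}
\]
using \eqref{E_equals_h_mod_T} and $q+1 \equiv 1 \pmod{p}$. Applying the mod-$T$ commutation \eqref{differentiate_hecke_mod_T} (with $s = 1$) to $h^{n-1}$ then yields
\[
\owp \cdot \Theta\!\bigl(\TT_\wp h^{n-1}\bigr) \;\equiv\; \TT_\wp\!\bigl(\Theta h^{n-1}\bigr) \;\equiv\; -(n-1)\,\TT_\wp h^n \pmod{T}.
\]
Substituting the inductive hypothesis on the left, expanding $\Theta$ termwise via $\Theta h^k \equiv -k\,h^{k+1} \pmod{T}$, and dividing by the invertible scalar $-(n-1)$ gives $\TT_\wp h^n \equiv \owp^n h^n + (\text{lower-order terms in } h) \pmod{T}$; each lower-order piece $c\,h^k$ from the hypothesis (with $k < n-1$) is sent by $\Theta$ to a multiple of $h^{k+1}$ of order $\leq n-1 < n$, preserving the lower-order-in-$h$ property.

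\textbf{The main obstacle} is the case $n \equiv 1 \pmod{p}$ with $n > q$: here $n-1 \equiv 0 \pmod{p}$ kills both sides of the displayed commutation, so the argument above gives no information about $\TT_\wp h^n$. To handle this case I would turn to the explicit $t$-expansion formula \eqref{action_on_t_exp} and argue directly. The first summand $\wp^{n(q+1)} h^n(\wp z)$ has $t$-valuation at least $n\cdot q^{\deg\wp}$, far larger than $n$, so one can isolate its contribution to the basis expansion in $\bigl\{\ov{h}^{\,n\bmod(q-1)}, \ov{h}^{\,n\bmod(q-1)+(q-1)}, \ldots, \ov{h}^{\,n}\bigr\}$ and show it does not disturb the claimed leading coefficient. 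The second summand $\sum_m a_m\, G_{m,\wp}(\wp t)$ is then analyzed using \eqref{Goss_poly_mod_T} --- in particular the fact that $G_{m,\wp}(X) \equiv X^m + (\text{lower-degree in } X) \pmod{T}$ --- which identifies the $\owp^n$ contribution to the coefficient of $\ov{h}^{\,n}$ coming from the leading $m = n$ term. The careful bookkeeping required to verify that the remaining lower-degree Goss contributions compose into genuinely lower-order terms in $h$ is the technical heart of this case.
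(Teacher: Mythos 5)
Your base cases and your $\Theta$-commutation step are sound, and the latter is exactly the paper's mechanism: the identity $\epsilon_{n} = \owp\,\epsilon_{n-1}$ for the leading coefficient, valid when $p \nmid (n-1)$, obtained from \eqref{differentiate_hecke_mod_T} and $\Theta h^{k} \equiv -k\,h^{k+1} \pmod T$. But the case you correctly flag as the obstacle --- $n \equiv 1 \pmod p$, $n > q$, which is infinitely many values of $n$ including already $n = q+1$ --- is a genuine gap, not a routine verification. Your sketch for it conflates the coefficient of $t^n$ with the coefficient of $\ov{h}^{\,n}$: since every basis element $\ov{h}^{\,m+j(q-1)}$ with $j < \mu$ also contributes to the $t^n$-coefficient, extracting the top coefficient in the $h$-basis from \eqref{action_on_t_exp} requires solving the full triangular system, i.e.\ computing \emph{all} the lower coefficients first, and in addition every Goss polynomial $G_{j,\wp}$ with $j > n$ contributes to $t^n$. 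There is no reason to expect these contributions to "compose into lower-order terms" without an argument, and carrying one out amounts to reproving the theorem by brute force. (Compare the paper's own Lemma \ref{needed} and Theorem \ref{Hecke_action_on_h_leq2q}, where this kind of bookkeeping is done only for the \emph{second-lowest} coefficient, only for $j \leq q-1$, and only under the extra hypothesis $q = p$.)

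The missing idea is the Frobenius compatibility $\TT_\wp(f^p) = (\TT_\wp f)^p$, which in characteristic $p$ follows directly from \eqref{definition_of_the_Hecke_action}. This gives the result for every $n$ divisible by $p$ from the result for $n/p$, with no $t$-expansion computation at all. The paper then runs the $\Theta$-induction \emph{downward} within each block $p^{\nu-1} < n \leq p^{\nu}$: the step deducing $n$ from $n+1$ requires $p \nmid n$, so it gets stuck precisely at the multiples of $p$ --- which are exactly the values Frobenius supplies. Your upward induction instead gets stuck at $n \equiv 1 \pmod p$, a residue class Frobenius cannot reach, which is why you are forced into the direct computation. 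Reversing the direction of the induction and anchoring it at the $p$-th powers closes the gap; as written, your proof is incomplete.
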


\begin{proof}
We prove that
\[
	\TT_\wp h^n \equiv \owp^n  h^n + \text{lower order terms in $h$} \mod{T}
\]
by downward induction on $n$. 

Let $n$ be given and let $p^\nu$ be the smallest power of $p$ bigger than or equal to $n$.  

If $0 \leq n \leq p$, then we already remarked in \eqref{g_and_h_are_eigenforms} that
\[	
	\TT_\wp g = \wp^{q-1} g, \qquad \qquad \TT_\wp h^n = \wp^n h^n \qquad (1 \leq n \leq p), 
\]
and therefore
\[
	\TT_\wp \widetilde{g} = \TT_\wp 1 = \owp^{q-1} = 1, \qquad \qquad \TT_\wp \widetilde{h^n} = \owp^n \widetilde{h^n} \qquad (1 \leq n \leq p).
\] 
This proves the result for $1 \leq n \leq p$.

Suppose that $\nu > 1$, i.e., $p^{\nu-1} < n \leq p^\nu$. Assume that the result is true for $n_0$ in the range $1 \leq n_0 \leq p^{\nu-1}$. 

If $p \mid n$, then $n = p n_0$ for $n_0$ between $p^{\nu-2}$ and $p^{\nu-1}$ and by the induction hypothesis
\[
	\TT_\wp h^n = (\TT_\wp h^{n_0})^p \equiv \left ( \owp^{n_0} h^{n_0} \right)^p + \text{lower order terms in $h$} \mod{T}.
\] 

If $p \nmid n$, then write
\[
	\TT_\wp h^n \equiv \epsilon_n h^n + \text{lower order terms in $h$} \mod{T}.
\]
We apply $\owp \Theta$ to both sides of this equation and use \eqref{differentiate_hecke_mod_T}:
\[
\begin{aligned}
	\TT_\wp \left (\Theta h^n \right ) & \equiv \owp \cdot \Theta \left (\TT_\wp h^n \right) \\
	& \equiv \owp \cdot  \Theta \left( \epsilon_n h^n + \text{lower order terms in $h$} \right ) \bmod T.
\end{aligned}
\]
Using \eqref{differentiate_h} this becomes
\[
	\TT_\wp ( - n h^{n+1} ) \equiv - n \owp \epsilon_n \cdot  h^{n+1} + \text{lower order terms in $h$} \mod{T}.
\]
As $p \nmid n$, we have
\[
	\TT_\wp (h^{n+1}) \equiv \owp \epsilon_n \cdot h^{n + 1} + \text{lower order terms in $h$} \mod{T}.
\]
The equation above shows that one can prove the result for $n$ if one assumes the result for $n+1$ and $p \nmid n$. This finishes the proof as the result for $p^\nu - 1$ is deduced from the one for $p^\nu$ (which we already have deduced from the result for the range $p^{\nu-2}$ to $p^{\nu-1}$), the result for $p^\nu - 2$ is  deduced from the one for $p^\nu - 1$ and so on. 
\end{proof}

As a corollary we see that

\begin{corollary} \label{cor1_to_main} In the basis $  \{ \ov{h}^m, \ldots, \ov{h}^{m + \mu (q-1)}   \}$, the action of $\TT_\wp$ on $\DMFmodT{k}{m}$ is given by the matrix
\[
	 \begin{bmatrix} \owp^m & 0 & \ldots &   0 \\
						\ast & \owp^m&  \ldots & \vdots \\
						 \vdots &   \vdots  &     \ddots &    0 \\
						 \ast & \ldots & \ast & \owp^m \\
						 \end{bmatrix}.
\]
In particular, 
\[
	\TT_\wp^{ss}  = \owp^m I_{1 + \mu, 1 + \mu},
\]
where $I_{1 + \mu, 1 + \mu}$ is the $(1 + \mu)$-by-$(1 + \mu)$ identity matrix.
\end{corollary}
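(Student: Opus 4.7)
The plan is to combine Theorem \ref{main} with the fact that $\owp\in\F_q^\times$, then read off the semisimple part from the resulting triangular matrix. First, I would apply Theorem \ref{main} to each basis vector $\ov{h}^{\,m+i(q-1)}$ of $\DMFmodT{k}{m}$ for $i=0,\ldots,\mu$, which yields
\[
\TT_\wp\,\ov{h}^{\,m+i(q-1)} \equiv \owp^{\,m+i(q-1)}\,\ov{h}^{\,m+i(q-1)} + (\text{strictly lower powers of }\ov{h}) \mod{T}.
\]
Since $\TT_\wp$ preserves the type $m$, these ``lower order'' terms must be $\F_q$-linear combinations of $\ov{h}^m,\ldots,\ov{h}^{\,m+(i-1)(q-1)}$, i.e.\ of basis vectors strictly preceding $\ov{h}^{\,m+i(q-1)}$. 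Consequently, in the chosen basis the matrix of $\TT_\wp$ is triangular with diagonal entries $\owp^{\,m+i(q-1)}$ for $i=0,\ldots,\mu$.

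Second, because $\wp\neq T$ is a monic irreducible in $A=\F_q[T]$, its constant term $\owp$ lies in $\F_q^\times$, so $\owp^{\,q-1}=1$. Hence $\owp^{\,m+i(q-1)}=\owp^{\,m}$ for every $i$, and every diagonal entry collapses to $\owp^{\,m}$, reproducing the matrix in the statement exactly. Writing this matrix as $\owp^{\,m}I_{1+\mu,\,1+\mu}+N$ with $N$ strictly triangular (hence nilpotent), and noting that the scalar part $\owp^{\,m}I$ is semisimple and commutes with $N$, the uniqueness of the Jordan--Chevalley decomposition gives $\TT_\wp^{\,ss}=\owp^{\,m}I_{1+\mu,\,1+\mu}$.

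The one subtlety, rather than a genuine obstacle, is that Theorem \ref{main} is formulated for $\TT_{\wp,n(q+1)}$ acting on $h^n$ in its ``home'' weight $n(q+1)$, whereas here $\ov{h}^{\,n}$ must be treated as a basis vector of $\DMFmodT{k}{m}$ with $k$ possibly much larger than $n(q+1)$. This is harmless: by \eqref{action_on_t_exp} the action of $\TT_{\wp,k}$ on a $t$-expansion depends on $k$ only through the scalar $\wp^k$, and since $k\equiv n(q+1)\pmod{q-1}$, the identity $\owp^{\,q-1}=1$ ensures that the two reductions modulo $T$ agree. Thus Theorem \ref{main} transfers directly to the basis vectors of $\DMFmodT{k}{m}$, and the corollary follows.
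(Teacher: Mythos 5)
Your argument is correct and is exactly the (implicit) argument of the paper, which states the corollary as an immediate consequence of Theorem \ref{main}: triangularity from the ``lower order terms in $h$'', collapse of the diagonal entries via $\owp^{q-1}=1$, and the Jordan--Chevalley decomposition for the semisimple part. Your closing remark on the weight-dependence of $\TT_{\wp,k}$ modulo $T$ is a worthwhile point that the paper only addresses implicitly via \eqref{differentiate_hecke_mod_T} and its remark on suppressing weights.
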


Wilson's Theorem now shows that

\begin{corollary} \label{cor2_to_main} If $\wp_1, \ldots, \wp_{q-1} \in A$ are primes with nonzero, pairwise distinct, residues modulo $T$, then
\[
	\TT_{\wp_1} \cdots \TT_{\wp_{q-1}} \equiv -1 + \text{ nilpotent matrix } \bmod T.
\]
\end{corollary}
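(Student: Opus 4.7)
The plan is to combine the triangular structure of $\TT_\wp$ established in Corollary~\ref{cor1_to_main} with Wilson's Theorem for $\F_q^{\times}$.

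First, I would invoke Corollary~\ref{cor1_to_main} to write each $\TT_{\wp_i}$, acting on $\DMFmodT{k}{m}$, in the form $\owp_i^m \, I_{1+\mu, 1+\mu} + N_i$, where $N_i$ is strictly lower triangular and hence nilpotent. Since the strictly lower triangular matrices form a two-sided ideal inside the lower triangular matrices, the product $\TT_{\wp_1} \cdots \TT_{\wp_{q-1}}$ is again lower triangular, with constant diagonal $\prod_{i=1}^{q-1} \owp_i^m = \bigl(\prod_{i=1}^{q-1} \owp_i\bigr)^m$ and a strictly lower triangular (hence nilpotent) off-diagonal contribution.

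Second, I would apply Wilson's Theorem. The hypothesis that $\owp_1, \ldots, \owp_{q-1}$ are nonzero and pairwise distinct in $\F_q$ forces them to exhaust the $(q-1)$-element group $\F_q^{\times}$, so $\prod_{i=1}^{q-1} \owp_i = -1$ in $\F_q$. Hence the diagonal collapses to $(-1)^m$, matching the scalar $-1$ in the statement (the two agree, e.g., when $m$ is odd or when the characteristic is $2$; in general the right scalar is $(-1)^m$).

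There is no real obstacle: the argument is a two-step consequence of the previous corollary and a classical identity. The only piece of bookkeeping is the observation that multiplying several lower triangular matrices with constant diagonals yields a lower triangular matrix whose diagonal is the product of those scalars, with all corrections confined to the strictly lower triangular (nilpotent) ideal; this follows at once from entry-wise matrix multiplication.
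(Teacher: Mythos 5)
Your argument is correct and is precisely the paper's intended proof: the corollary is stated immediately after Corollary~\ref{cor1_to_main} with only the remark ``Wilson's Theorem now shows that,'' i.e., the product of lower-triangular matrices with constant diagonals $\owp_i^m$ has diagonal $\prod_i \owp_i^m$ plus a strictly lower-triangular (nilpotent) correction, and Wilson's Theorem evaluates the product of all elements of $\F_q^*$ as $-1$. Your parenthetical remark that the resulting scalar is $(-1)^m$ rather than $-1$ is in fact more precise than the paper's own statement, which as written is literally correct only when $m$ is odd or the characteristic is $2$.
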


Corollary \ref{cor1_to_main} explicitly determines the semisimple part of $\TT_{\wp}$ modulo $T$ and we obtain the following result.

\begin{theorem} \label{hecke_algebra_mod_T}
We have
\[
\HassT{k}{m}=\F_q[{(\F_q^*)}^m],
\]
with the isomorphism being given by 
\[
	(\TT_\wp \mod{T})^{\text{ss}} \mapsto \owp^m.
\]
\end{theorem}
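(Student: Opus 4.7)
The strategy is to combine the explicit description of $(\TT_\wp\bmod T)^{ss}$ from Corollary~\ref{cor1_to_main} with the commutativity of the Hecke algebra in order to pin down every absolutely semisimple element of $\HT{k}{m}$ simultaneously.

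First, by Corollary~\ref{cor1_to_main}, in the basis $\{\ov{h}^{m+i(q-1)}\}_{i=0}^{\mu}$ of $\DMFmodT{k}{m}$ we may write
\[
\TT_\wp = \owp^m I + N_\wp,
\]
where $N_\wp$ is strictly lower triangular and hence nilpotent. Since any two Hecke operators commute, the identity $[N_{\wp_1}, N_{\wp_2}] = [\TT_{\wp_1}, \TT_{\wp_2}] = 0$ shows that the family $\{N_\wp\}_{\wp \ne T}$ consists of pairwise commuting nilpotents.

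Next, I propagate this decomposition to all of $\HT{k}{m}$. Any $x \in \HT{k}{m}$ is expressible as a polynomial $P(\TT_{\wp_1},\ldots,\TT_{\wp_r})$ with coefficients in $\F_q$. Substituting $\TT_{\wp_i} = \owp_i^m I + N_{\wp_i}$ and expanding multinomially, which is legitimate because the $N_{\wp_i}$'s commute, yields
\[
x = P(\owp_1^m,\ldots,\owp_r^m)\cdot I + N,
\]
where $N$ is a polynomial in the commuting nilpotents $\{N_{\wp_i}\}$ with zero constant term, hence itself nilpotent. The scalar matrix and $N$ commute, so by uniqueness of the Jordan decomposition this is precisely the semisimple-plus-nilpotent splitting of $x$. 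Consequently $x$ is absolutely semisimple if and only if $N = 0$, i.e.\ $x = \alpha I$ for some $\alpha \in \F_q$, and the resulting scalars $\alpha$ fill out precisely the subring of $\F_q$ generated by $\{\owp^m : \wp \ne T \text{ prime}\}$.

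It remains to identify this subring. The degree-one monic irreducibles distinct from $T$ are the polynomials $T + c$ with $c \in \F_q^*$, whose reductions modulo $T$ are $c$, so the set $\{\owp : \wp\ne T\text{ prime}\}$ already equals $\F_q^*$. Raising to the $m$-th power gives $\{\owp^m\} = (\F_q^*)^m$, so the subring in question is $\F_q[(\F_q^*)^m]$, and the map $\alpha I \mapsto \alpha$ is the desired isomorphism sending $(\TT_\wp)^{ss}$ to $\owp^m$. The main point requiring care is the second step: commutativity of $\{N_\wp\}$ is what allows the Jordan decomposition of an arbitrary polynomial in the $\TT_\wp$'s to separate cleanly into a scalar part and a nilpotent part; once this is in hand, the remainder of the argument is formal.
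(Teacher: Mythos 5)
Your proposal is correct and follows essentially the same route as the paper, which deduces the theorem directly from Corollary~\ref{cor1_to_main}: the paper leaves the deduction implicit, while you spell out the routine verifications (commuting nilpotent parts, uniqueness of the Jordan decomposition, and the observation that the degree-one primes $T+c$ already realize every element of $\F_q^*$ as a reduction $\owp$). The only point stated a little quickly is that every scalar in $\F_q[(\F_q^*)^m]$ actually occurs in $\HT{k}{m}$, but this follows since $(\TT_\wp)^{\text{ss}}=\owp^m I$ is a polynomial in $\TT_\wp$ over the perfect field $\F_q$ and hence lies in the algebra.
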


As mentioned in the introduction our result is in contrast with the classical case where the Hecke operators modulo $2$ are nilpotent. Corollary~\ref{cor2_to_main} also shows this contrast, since for every classical modular form with integer coefficients $f$ there exists a positive integer $i$ with the property that
\[
	\TT_{\ell_1} \cdots \TT_{\ell_i} f \equiv 0 \bmod 2,
\]
for every collection of odd primes $\ell_1, \ldots, \ell_i$. For more on this see pages 34-35 in \cite{ono2004web}. These results already suggest that the Hecke algebra action on $\DMF{k}{m}$ has a different structure than in the case of classical modular forms. We conjecture another difference in Conjecture~\ref{main_conjecture} below.

\begin{remark} \label{other_primes}
There is nothing special about $T$ as opposed to any other degree one monic irreducible. One can therefore formulate the results with obvious changes for the Hecke action modulo $(T + \theta)$, $\theta \in \F_q$, or indeed modulo
\[
	 \prod_{\theta \in \F_q} (T + \theta) = T^q - T.
\]
\end{remark}

\section{The Hecke Action on $\DMF{k}{m}$} \label{no_reduction_mod_T}

\subsection{Computations when $q$ is prime}

In this subsection we use the computation of the Hecke action modulo $T$ to conclude that certain powers of $h$ are not eigenforms under the assumption that $q$ is prime.

For the duration of this subsection we assume that $q = p$. We will keep writing~$q$, but the reader should be aware that our assumption that $q$ is prime is needed for the proofs that we present.

\begin{lemma} \label{needed}
Let $\wp$ be a prime of degree $d$, $\wp \neq T$. For $j$ in the range $1 \leq j \leq (q-1)$, the form $\ov{h}^{q^d +j}$ is not an eigenform for $\TT_\wp$.
\end{lemma}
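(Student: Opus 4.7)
The plan is to show that $[t^{n-(q-1)}]\,\TT_\wp \ov{h}^n$ is nonzero modulo $T$ for $n = q^d + j$ with $1 \le j \le q-1$. Indeed, if $\ov{h}^n$ were an eigenform for $\TT_\wp$, then $\TT_\wp \ov{h}^n$ would be a scalar multiple of $\ov{h}^n$, and since $\ov{h}^n$ has $t$-expansion starting at $t^n$, this would force $[t^{n-(q-1)}]\,\TT_\wp \ov{h}^n = 0$. Any nonzero value therefore contradicts the eigenform property.

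Applying \eqref{action_on_t_exp} to $\ov{h}^n = \sum_{s\ge n} a_s t^s$, the terms $\owp^{n(q+1)} a_s\,\ov{t}_\wp^{\,s}$ begin in $t$-degree $sq^d \ge nq^d > n-(q-1)$ and contribute nothing. Hence
\[
[t^{n-(q-1)}]\,\TT_\wp \ov{h}^n \;\equiv\; \owp^{\,n-(q-1)} \sum_{s \ge n} a_s \cdot [X^{n-(q-1)}]\, G_{s,\wp}(X) \pmod{T}.
\]
The critical step is to isolate the $s = n$ term via \eqref{Goss_poly_mod_T}. The tuples $(i_0,\ldots,i_d)$ contributing to $[X^{n-(q-1)}]\,G_{n,\wp}(X)$ satisfy $\sum i_k = n-q$ and $\sum i_k q^k = n-1$; subtracting yields $\sum_{k\ge 1} i_k(q^{k-1}+\cdots+1) = 1$, whose unique nonnegative solution is $i_1=1$, $i_0 = n-q-1$, and $i_k=0$ for $k \ge 2$. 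This gives
\[
[X^{n-(q-1)}]\, G_{n,\wp}(X) \;\equiv\; (n-q)\,\frac{\alpha_1}{\alpha_0} \;\equiv\; j \cdot \frac{\alpha_1}{\alpha_0} \pmod T,
\]
using $q=p$ to reduce $n-q = q^d - q + j$ to $j$, which is nonzero in $\F_q$ by hypothesis. Since $a_n = 1$, the $s = n$ term contributes $j\,\alpha_1/\alpha_0$.

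The main obstacle is twofold. First, when $\alpha_1 \ne 0$ (automatic when $d=1$, since $\alpha_d=1$), the contributions from $s > n$ must not cancel $j\alpha_1/\alpha_0$. These come from $s = n + (q-1)r$ with $r \ge 1$; the admissible tuples are enumerable, and combined with the coefficients $a_s = [t^s]\ov{h}^n$, which modulo $T$ are accessible through the explicit $t$-expansion of $\ov{h}$, the resulting sum can be evaluated. For $d = 1$ and $n = q + j$, the constraint $\sum_{k\ge 1} i_k = r+1 \le m = j$ restricts $r \le j-1 \le q-2$, and the sparsity of $\ov{h}^n$ in $t$-degrees leaves only $r = 0$ surviving, so that $[t^{n-(q-1)}]\,\TT_\wp \ov{h}^n = j\,\owp^{n-q} \ne 0$. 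For $d \ge 2$ one argues by Lucas's theorem on the binomial coefficients arising from the $t$-expansion of $\ov{h}^n$. Second, when $d \ge 2$ and $\alpha_1 = 0$, the $s = n$ term vanishes and one replaces $[t^{n-(q-1)}]$ with $[t^{n-N_{k^*}(q-1)}]$, where $N_{k^*} = 1 + q + \cdots + q^{k^*-1}$ and $k^*$ is the least index with $\alpha_{k^*}\ne 0$ (which exists because $\alpha_d = 1$); the analogous tuple $i_{k^*} = 1$ then yields the nonzero contribution $j\,\alpha_{k^*}/\alpha_0$. The hardest step is controlling the tail in all cases to ensure that no cancellation of the main term can occur.
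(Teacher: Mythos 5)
Your reduction of the problem is sound: since $\ov{h}^{n}$ begins at $t^{n}$, a nonzero coefficient of $t^{n-(q-1)}$ in $\TT_\wp\ov{h}^{n}$ does rule out the eigenform property, and your extraction of the $s=n$ contribution from \eqref{Goss_poly_mod_T} (the unique tuple $i_1=1$, giving $(n-q)\alpha_1/\alpha_0\equiv j\alpha_1/\alpha_0$) is computed correctly. But there is a genuine gap, and it sits exactly where you flag it: the contributions from $s>n$ are never controlled. At the coefficient $t^{n-(q-1)}$ the order bound $\mathrm{ord}_X G_{s,\wp}(X)\ge (s-1)/q^{d}+1$ only eliminates $s>q^{d}(n-q)+1$, so on the order of $q^{2d}$ values of $s$ can contribute, each weighted by a coefficient $a_s$ of $\ov{h}^{n}$ and a Goss-polynomial coefficient; the assertions that ``the resulting sum can be evaluated,'' that sparsity ``leaves only $r=0$ surviving,'' and that for $d\ge 2$ ``one argues by Lucas's theorem'' are placeholders, not arguments. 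That this tail is not a formality is visible in the paper's own Theorem~\ref{Hecke_action_on_h_leq2q}: there the analogous coefficient receives contributions from every $s=q^i+1$, and an entire separate lemma ($a_{q^i+1}(q+1)=0$ for $i\ge 2$) is needed to kill them even in the single case $n=q+1$. Your secondary worry about $\alpha_1=0$ (and the $k^*$ patch) compounds the problem rather than resolving it, since each new target coefficient brings its own uncontrolled tail.

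The paper avoids all of this by aiming at the \emph{lowest} possible coefficient rather than the highest. For $j=1$ it computes only the $t^{2}$-coefficient of $\TT_\wp\ov{h}^{q^d+1}$: the bound $\mathrm{ord}_X G_{s,\wp}(X)\ge(s-1)/q^{d}+1>2$ for $s>q^{d}+1$ makes the tail empty, and the surviving Goss coefficient is $\alpha_d/\alpha_0$ with $\alpha_d=1$ guaranteed by monicity --- no case split on the vanishing of intermediate $\alpha_i$ is ever needed. The cases $2\le j\le q-1$ are then obtained not by fresh coefficient extraction but by the commutation relation \eqref{differentiate_hecke_mod_T} together with $\Theta\ov{h}\equiv-\ov{h}^{2}$, which propagates the nonzero lowest-order term $j\owp^{j}\ov{h}^{j+1}$ inductively (using $p\nmid j$). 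If you want to salvage your route, you would either need to prove the vanishing of the relevant $a_s$ for all $s$ in the contributing range --- a substantial combinatorial undertaking --- or, more realistically, move your target down to $t^{j+1}$ and adopt the $\Theta$-induction, at which point you have reconstructed the paper's proof.
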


\begin{proof}
We will show that for $j$ in the range $1 \leq j \leq (q-1)$
\[
	\TT_\wp h^{q^d + j}  \equiv j \owp^j h^{j + 1} +  \text{ higher order terms in $h$}\mod{T}
\]
by induction on $j$. 

First, consider the case $j = 1$. We want to compute the $t^2$-term in the $t$-expansion of $\TT_\wp \ov{h}^{q^d + 1}$.

Let $a_n$ be the coefficients in the $t$-expansion of $\ov{h}^{q^d + 1}$, i.e., let
\[
	\ov{h}^{q^d + 1} = \sum_{n \geq q^d + 1} a_n t^n = t^{q^d + 1} + \text{higher order terms in $t$}.
\]
Let $G_{n, \wp}$ be the $n$-th Goss polynomial of the $\wp$-torsion of the Carlitz module.

Since $t_\wp$ starts with $t^{q^d}$ (see \eqref{formula_for_ta}), formula \eqref{action_on_t_exp} shows that $\TT_\wp \ov{h}^{q^d + 1}$ is determined by 
\[
	\sum_{n \geq q^d + 1} a_n G_{n, \wp} (\wp t).
\]
According to the proof of \cite[(3.9)]{Gek}, 
\[
	\text{ord}_X G_{n, \wp} (X) \geq  \frac{n-1}{q^d} + 1.
\]
 Since $(n-1)/q^d + 1 > 2 \iff n > q^d + 1$, we see that the $t^2$-term in the $t$-expansion of $\TT_\wp h^{q^d + 1}$ is determined by $a_{q^d + 1} G_{q^d + 1, \wp} (\wp t) = G_{q^d + 1, \wp} (\wp t)$.
 
 It remains to compute $G_{q^d + 1, \wp} (X)$. We use formula \eqref{Goss_poly_mod_T}, with $n = q^d$ and $m = 1$ to get
 \[
 	G_{q^d + 1, \wp} (X) \equiv \frac{\alpha_d}{\alpha_0} X^2 + \text{higher order terms in $X$} \mod{T}
\]
But $\wp$ is monic of degree $d$ and $\alpha_0 = \owp$, therefore $G_{q^d + 1, \wp} (\wp t) \equiv \owp t^2 + \cdots \mod{T}$, and 
\[
	\TT_{\wp} h^{q^d + 1} \equiv  \owp t^2 + \text{ higher order terms in $t$ } \mod{T}.
\]
This finishes the case $j = 1$. 

To finish the proof, write
\[
\TT_\wp h^{q^d + j} \equiv \epsilon_{j+1} t^{j+1} + \text{ higher order terms in $t$} \mod{T}.
\]

Assume that
\[
	\TT_{\wp} h^{q^d + j -1} \equiv (j-1) \owp^{j-1} t^{j} + \text{ higher order terms in $t$} \mod{T},
\]
apply $\owp \Theta$ to both sides and use \eqref{differentiate_hecke_mod_T}. We get
\[
\begin{aligned}
	\TT_{\wp} \left ( \Theta h^{q^d + (j-1)} \right) & \equiv (j-1) \TT_\wp h^{q^d + j} \\
	& \equiv \owp^j j (j-1) t^{j+1} + \text{higher order terms in $t$} \mod{T}.
\end{aligned}
\]
Thus $\epsilon_{j+1} = j \owp^j$.
\end{proof}

\begin{theorem} \label{h^qd} Let $d$ be a positive integer. If $\wp$ is a monic irreducible of degree $d$, different from $T$, then $h^{q^d + j}$ is not an eigenform for $\TT_\wp$ for any $j = 1, \ldots, q-1$.
\end{theorem}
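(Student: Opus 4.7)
The plan is to argue by contradiction, leveraging Lemma~\ref{needed} to transfer the non-eigenform property from the reduction modulo $T$ back to the form $h^{q^d+j}$ itself. Suppose toward a contradiction that $h^{q^d+j}$ is an eigenform for $\TT_\wp$, say $\TT_\wp h^{q^d+j} = \lambda\,h^{q^d+j}$ for some $\lambda \in K$.

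First I would verify that $\lambda$ is reducible modulo $T$, i.e.\ has no $T$ in its denominator. The key point is that $h$ has a $t$-expansion with coefficients in $A$ (by the $A$-expansion defining it), hence so does $h^{q^d+j}$, and its leading $t$-term is $1 \cdot t^{q^d+j}$. Because $\wp \neq T$, the paper has established that $\TT_\wp$ preserves $\DMF{k}{m}\cap M_T$, so $\TT_\wp h^{q^d+j}$ has $t$-expansion coefficients with denominators coprime to $T$. Comparing the coefficient of $t^{q^d+j}$ on both sides of the eigenequation forces $\lambda \in A_{(T)}$, so $\lambda$ reduces to some $\ov\lambda \in \F_q$.

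Next I would reduce the eigenequation modulo $T$ to obtain $\TT_\wp \ov h^{q^d+j} = \ov\lambda\, \ov h^{q^d+j}$ in $\widetilde M$. Now compare $t$-expansions. On the right hand side the $t$-expansion starts at $t^{q^d+j}$, so in particular the coefficient of $t^{j+1}$ is zero (since $d\geq 1$ gives $j+1 < q^d+j$). On the left hand side, the computation carried out in the proof of Lemma~\ref{needed} shows that the coefficient of $t^{j+1}$ in $\TT_\wp \ov h^{q^d+j}$ is exactly $j\,\owp^j$.

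The contradiction comes from checking that $j\,\owp^j \neq 0$ in $\F_q$. Since $\wp \neq T$, the constant term $\owp \in \F_q^\times$ is nonzero. The hypothesis that $q = p$ is prime, together with the range $1 \leq j \leq q-1$, gives $p \nmid j$, so $j \neq 0$ in $\F_q$. Therefore $j\,\owp^j \neq 0$, contradicting the fact that this coefficient had to vanish. I expect the main subtlety to be the denominator check in the first step: one must be careful that the eigenvalue $\lambda$, a priori an element of $K$, actually lies in the local ring at $T$ before one can meaningfully reduce the equation modulo $T$. Once that point is settled, the rest of the argument is essentially a comparison of $t$-expansion coefficients in degrees $j+1$ versus $q^d+j$.
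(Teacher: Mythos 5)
Your proposal is correct and follows the same route as the paper: reduce the eigenform equation modulo $T$ and invoke Lemma~\ref{needed} (whose internal computation you recapitulate) to get a contradiction. The paper's proof is a two-line reduction that leaves implicit the point you carefully verify — that the putative eigenvalue $\lambda$ lies in the local ring at $T$ so the reduction is legitimate — and your filling in of that step is a welcome, correct addition rather than a deviation.
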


\begin{proof} If $h^{q^d + j}$ is an eigenform for $\TT_\wp$, then $\ov{h}^{q^d + j}$ is an eigenform for $\TT_\wp$. This contradicts Corollary \ref{needed} and shows that $h^{q^d + j}$ cannot be an eigenform for~$\TT_\wp$.
\end{proof}

We continue to assume that $q$ is prime. The computations above can also be used to show that $h^{q+j}$ cannot be an eigenform for $\TT_\wp$, when $\wp$ has a $T$-term. 

We will need to work with the coefficients of the $t$-expansions of several powers of~$\ov{h}$ at the same time, therefore let
\[
	\ov{h}^m = \sum_{n = m}^\infty a_n (m) t^n, \qquad \qquad a_n (m) \in \F_q.
\]

\begin{lemma} If
\[
	\ov{h}^{q+1} = \sum_{n = q+1}^\infty a_n (q+1) t^n ,
\]
then for $i \geq 2$, 
\[
	a_{q^i + 1} (q + 1) = 0.
\]
\end{lemma}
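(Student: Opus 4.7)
The plan is to combine two descriptions of $\ov{h}^q$ modulo $T$---the Frobenius twist and an iterated Serre derivative---in order to reduce $a_{q^i+1}(q+1)$ to a single coefficient of $\ov{h}^2$, which will vanish by direct inspection.

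First I will use $\ov{h}^{q+1} = \ov{h} \cdot \ov{h}^q$ together with the characteristic-$p$ Frobenius identity $\ov{h}^q = \sum_m a_m(1)^q t^{mq}$ to extract
\[
a_{q^i+1}(q+1) = \sum_{\ell=0}^{q^{i-1}-1} a_{1 + \ell q}(1) \cdot a_{q^{i-1}-\ell}(1)^q,
\]
by parametrizing the pairs $(k, m)$ with $k + mq = q^i + 1$, $k, m \geq 1$, as $k = 1 + \ell q$ and $m = q^{i-1} - \ell$.

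Next I will establish the identity $a_{1+kq}(1) = a_{k+1}(1)^q$ for every $k \geq 0$. To see this, iterating $\Theta \ov{h} = -\ov{h}^2$ via the Leibniz rule yields $\Theta^n \ov{h} = (-1)^n n!\, \ov{h}^{n+1}$; at $n = q-1$, Wilson's theorem (recalling that $q=p$ is prime throughout this subsection) collapses this to $\ov{h}^q = -\Theta^{q-1} \ov{h}$. On the other hand, $\Theta^{q-1} t^m = (-1)^{q-1} m(m+1)\cdots(m+q-2)\, t^{m+q-1}$, and this product of $q-1 = p-1$ consecutive integers vanishes modulo $p$ unless $m \equiv 1 \pmod p$, in which case it equals $-1$ (Wilson again). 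Matching coefficients of $t^{qm}$ on both sides of $\ov{h}^q = -\Theta^{q-1}\ov{h}$ gives the identity.

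I will then substitute this identity into the sum above, pull out the $q$-th power using $\sum x_i^q = (\sum x_i)^q$ in characteristic $p$, and recognize the result as a coefficient of $\ov{h}^2$:
\[
a_{q^i+1}(q+1) = \left( \sum_{\ell=0}^{q^{i-1}-1} a_{\ell+1}(1) \cdot a_{q^{i-1}-\ell}(1) \right)^q = a_{q^{i-1}+1}(2)^q.
\]
To finish, $\ov{h}^2 = -\Theta \ov{h}$ translates into $a_M(2) = (M-1)\, a_{M-1}(1)$, and at $M = q^{i-1}+1$ the prefactor $q^{i-1}$ is divisible by $p$ whenever $i \geq 2$, so $a_{q^{i-1}+1}(2) = 0$ and consequently $a_{q^i+1}(q+1) = 0$.

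I expect the second step---establishing $a_{1+kq}(1) = a_{k+1}(1)^q$---to be the main obstacle: one must carefully track the signs from $(-1)^{q-1}$ and apply Wilson's theorem twice, once to collapse $(q-1)!$ to $-1$ in the formula $\Theta^{q-1}\ov{h} = (-1)^{q-1}(q-1)!\, \ov{h}^q$, and once to see that the descending factorial $m(m+1)\cdots(m+q-2)$ is supported on $m \equiv 1 \pmod p$. Once this identity is in hand, the rest of the argument collapses the original double sum to a single Serre-derivative coefficient, whose vanishing is immediate.
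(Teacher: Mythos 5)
Your argument is correct, but it reaches the conclusion by a genuinely different route than the paper. The paper applies $(-1)^{q-1}\tfrac{1}{(q-1)!}\Theta^{q-1}$ directly to $\ov{h}^{q+1}$, which by Wilson's theorem carries it onto $\ov{h}^{2q}=(\ov{h}^2)^q$; it then reads off the vanishing of $a_{q^i+1}(q+1)$ from $a_{q^i+q}(2q)=0$, with the unit factor $(q^i+1)(q^i+2)\cdots(q^i+q-1)\equiv (q-1)!$ playing the role of your second Wilson computation. You instead factor $\ov{h}^{q+1}=\ov{h}\cdot\ov{h}^{q}$ and first prove the standalone self-similarity identity $a_{1+kq}(1)=a_{k+1}(1)^q$ by applying $\Theta^{q-1}$ to $\ov{h}$ alone, which lets you collapse the resulting convolution (via the freshman's dream) to $a_{q^{i-1}+1}(2)^q$. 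Both proofs ultimately bottom out at the same elementary fact, namely $a_{sq+1}(2)=0$, which follows from $\ov{h}^2=-\Theta\ov{h}$; and both use the same toolkit ($\Theta\ov{h}=-\ov{h}^2$, Frobenius, $\Theta^{q-1}$ plus Wilson, and the standing hypothesis $q=p$). The trade-off: the paper's route is shorter because it never manipulates a convolution sum, while your intermediate identity $a_{1+kq}(1)=a_{k+1}(1)^q$ is a clean structural statement about the coefficients of $\ov{h}$ itself (consistent with the $A$-expansion $h=\sum_{a}a^{q}t_{a}$ and the fact that $t_a\equiv t^{q^{\deg a}}+\cdots$) that could be reused elsewhere. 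All the delicate points you flagged --- the sign $(-1)^q=-1$ in every characteristic, the two uses of Wilson's theorem, and the observation that $m(m+1)\cdots(m+q-2)$ is supported on $m\equiv 1\pmod p$ --- check out, so the proposal stands as a complete alternative proof.
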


\begin{proof}
Observe that
\[
	\sum_{n = 2} a_n (2) t^n = \ov{h}^2 = -\Theta \ov{h} = - \Theta \left ( \sum_{n = 1}^\infty a_n (1) t^n \right) = \sum_{n = 1}^\infty n a_n (1) t^{n+1}.
\]
Therefore for any $s > 0$, 
\[
	a_{s q + 1} (2) = 0.
\]
Using $\ov{h}^{2 q} = (\ov{h}^2)^q$, this gives
\[
	a_{s q^2 + q} (2 q) = 0.
\]
In particular, $a_{q^{i} + q} (2 q) = 0$ for $i \geq 2$.

Consider $\ov{h}^{q + 1}$. Apply $(-1)^{q-1} \frac{1}{(q-1)!} \Theta^{q-1}$ to 
\[
	\ov{h}^{q+1} =  \sum_{n = q+1}^\infty a_n (q+1) t^n,
\]
to get
\[
	\sum_{m = 2 q}^\infty a_m (2 q) t^m = \frac{1}{(q-1)!} \sum_{n = q-1}^\infty \left ( n (n+1) \cdots (n + q - 1) \right) a_n(q+1) t^{n + q-1}.
\]
For $n = q^i + 1$ this gives
\[
	a_{q^i + q} (2 q) = \frac{1}{(q-1)!} \left ( (q^i + 1) (q^i + 2) \cdots (q^i + q - 1) \right) a_{q^i + 1} (q + 1).
\]
Since $a_{q^i + q}(2q) = 0$, ($i \geq 2$), while $\frac{1}{(q-1)!} \left ( (q^i + 1) (q^i + 2) \cdots (q^i + q - 1) \right) \not \equiv 0 \mod{T}$, this completes the proof. 
\end{proof}

\begin{theorem} \label{Hecke_action_on_h_leq2q} Let $\wp = T^d + \alpha_{d-1} T^{d-1} + \cdots + \alpha_1 T + \alpha_0$. 
For $j \in \mathbb{Z}$, $1 \leq j \leq (q-1)$, we have
\[
	\TT_\wp h^{q + j}  \equiv   \owp^j \alpha_1 h^{j+1} + \owp^{j+1} h^{q + j} \mod{T}.
\]
\end{theorem}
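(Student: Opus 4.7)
The plan is to determine both coefficients in the expansion of $\TT_\wp h^{q+j} \bmod T$ with respect to the basis $\{\ov{h}^{j+1},\, \ov{h}^{q+j}\}$ of the relevant $\DMFmodT{k}{m}$ guaranteed by \eqref{basis_for_M_k_m_mod_T} (treating the boundary case $j=q-1$ separately if needed). By Corollary \ref{cor1_to_main} the matrix of $\TT_\wp$ modulo $T$ in this basis is lower triangular with diagonal entry $\owp^{q+j}$; Fermat's little theorem in $\F_q$ gives $\owp^q=\owp$, hence $\owp^{q+j}=\owp^{j+1}$, which already matches the claimed coefficient of $h^{q+j}$. It therefore remains only to pin down the off-diagonal entry, equivalently the coefficient of $t^{j+1}$ in the $t$-expansion of $\TT_\wp \ov{h}^{q+j}$. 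I would handle this by induction on $j$, mimicking the structure of the proof of Lemma \ref{needed}.

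For the base case $j=1$, I apply \eqref{action_on_t_exp} modulo $T$. The first summand $\owp^k \sum a_n t_\wp^n$ contributes only to $t$-powers at least $q^d(q+1)$, so it is irrelevant at the $t^2$-level. For the second summand $\sum a_n G_{n,\wp}(\wp t)$, the order bound $\mathrm{ord}_X G_{n,\wp}(X) \geq (n-1)/q^d + 1$ used in the proof of Lemma \ref{needed} restricts contributing indices to $n \leq q^d+1$; combined with $\ov{h}^{q+1}=t^{q+1}+\cdots$ and the vanishing $a_{q^i+1}(q+1)=0$ for $i\geq 2$ just established, only $n=q+1$ survives. Formula \eqref{Goss_poly_mod_T} with $(m,n)=(1,q)$ gives the $X^2$-coefficient of $G_{q+1,\wp}(X)$ as $\alpha_1/\alpha_0=\alpha_1/\owp$; substituting $X=\wp t$ yields $t^2$-coefficient $\owp^2\cdot\alpha_1/\owp = \owp\,\alpha_1$, as required.

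For the inductive step $j\to j+1$ I apply $\owp\,\Theta$ to both sides of the formula for $\TT_\wp h^{q+j}$, use \eqref{differentiate_hecke_mod_T} to commute $\Theta$ past $\TT_\wp$, and use the identity $\Theta h^n \equiv -n h^{n+1}\pmod T$ that follows from \eqref{differentiate_h} by the Leibniz rule. Because $q+j\equiv j\pmod p$, the left side becomes $-j\,\TT_\wp h^{q+j+1}$, while the right side expands as a linear combination of $h^{j+2}$ and $h^{q+j+1}$; since $1\leq j\leq q-1=p-1$ in the working assumption of this subsection, the scalar $j$ is a unit in $\F_q$ that may be divided out, producing the desired formula at $j+1$. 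The main obstacle is the base case: three separate inputs (the order bound for $G_{n,\wp}$, the explicit mod-$T$ description \eqref{Goss_poly_mod_T}, and the vanishing lemma for the coefficients of $\ov{h}^{q+1}$) must be combined to isolate the single surviving index and evaluate its contribution correctly in terms of $\alpha_1$; the induction itself is then essentially mechanical.
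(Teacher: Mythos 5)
Your strategy is the same as the paper's: get the diagonal coefficient from Theorem \ref{main} together with $\owp^{q}=\owp$, compute the $t^2$-coefficient of $\TT_\wp \ov{h}^{q+1}$ by combining the order bound for Goss polynomials, formula \eqref{Goss_poly_mod_T} with $(m,n)=(1,q)$, and the vanishing $a_{q^i+1}(q+1)=0$ for $i\geq 2$, and then propagate with $\Theta$. The base case $j=1$ is correct and matches the paper essentially line for line (the paper phrases the survival of the single index $n=q+1$ by summing $a_{q^i+1}(q+1)\alpha_i\owp^{-1}$ over $i$ and noting all terms but $i=1$ die; you should also say explicitly that indices $n$ in the range not of the form $q^i+1$ contribute no $X^2$-term, which is what \eqref{Goss_poly_mod_T} with $m=1$ gives).

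The problem is your inductive step: the assertion that dividing out the unit $j$ ``produces the desired formula at $j+1$'' is not what the computation delivers. Applying $\owp\Theta$ to $\TT_\wp h^{q+j}\equiv c_j h^{j+1}+\owp^{j+1}h^{q+j}$ and using \eqref{differentiate_hecke_mod_T} together with $\Theta h^{n}\equiv -n h^{n+1}$, the left side is $-j\,\TT_\wp h^{q+j+1}$ while the right side is $-(j+1)\owp c_j h^{j+2}-j\owp^{j+2}h^{q+j+1}$; dividing by $-j$ gives $c_{j+1}=\frac{j+1}{j}\owp c_j$, hence $c_j=j\,\owp^{j}\alpha_1$ rather than $\owp^{j}\alpha_1$. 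So the recursion proves $\TT_\wp h^{q+j}\equiv j\,\owp^{j}\alpha_1 h^{j+1}+\owp^{j+1}h^{q+j}$, not the displayed statement. This is consistent with Lemma \ref{needed} (whose conclusion carries the explicit factor $j$) and with what the paper's own prescription of applying $\wp^{j-1}\Theta^{j-1}$ to the $j=1$ case yields, so the displayed theorem is evidently missing a factor of $j$; since $1\leq j\leq q-1=p-1$ makes $j$ a unit, the downstream criterion ($h^{q+j}$ is an eigenform iff $\alpha_1=0$) is unaffected. Either way, you cannot simply announce that the induction closes: you must track the constant, and as written your step would ``prove'' a formula that contradicts your own recursion. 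A smaller point: for $j=q-1$ the space $\DMFmodT{k}{m}$ containing $\ov{h}^{2q-1}$ is three-dimensional (it also contains $\ov{h}$), so the two-term ansatz needs a word of justification there, as you note but do not carry out.
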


\begin{proof}
Let
\[
	\TT_\wp h^{q + j} \equiv \epsilon_{j+1} h^{j+1} + \epsilon_{q + j} h^{q + j} \mod{T}.
\]
Theorem \ref{main} shows that $\epsilon_{q + j} = \owp^{j+1}$, therefore we need to compute $\epsilon_{j +1}$.  We will start with the case $j = 1$ and use $\Theta$ to get the other cases. To that end, consider the $t^2$-term in the $t$-expansion of $\TT_\wp \ov{h}^{q+1}$. We know that the $t^2$-term is completely determined by
\[
	\sum_{n = q+1}^{q^d + 1} a_n (q+1) \cdot G_{n, \wp} (\wp t).
\]
But if we look at the explicit formula for $G_{n, \wp} (X)$ modulo $T$, i.e., formula \eqref{Goss_poly_mod_T}, and take $m = 1$, we can see that 
\[
	G_{q^i + 1, \wp} (X) \equiv \frac{\alpha_i}{\alpha_0} X^2 + \cdots \equiv \alpha_i \owp^{-1} X^2 + \cdots   \mod{T}, 
\]
while the other $G_{n, \wp} (X)$ in the range $q +1 \leq n \leq q^d + 1$ do not have an $X^2$-term.
Therefore 
\[
	\TT_\wp h^{q + 1} \equiv \owp^2 t^2 \sum_{i = 1}^{d} a_{q^i + 1} (q+1) \cdot \alpha_i \owp^{-1} + \text{higher order terms in $t$} \mod{T}.
\]
As $a_{q + 1} (q+1) = 1$, and $a_{q^i + 1} (q+1) = 0$ for $i \geq 2$, we see that
\[
	\TT_\wp h^{q + 1} \equiv \owp \alpha_1 t^2 + \text{higher order terms in $t$} \mod{T}.
\]
This proves the case $j = 1$. For the other cases, just apply $\wp^{j-1} \Theta^{j-1}$ to 
\[
	\TT_\wp h^{q + 1} \equiv \owp \alpha_1 h^2 + \owp^{q+1} h^{q + 1} \mod{T}.
\]
\end{proof}

Theorem \ref{Hecke_action_on_h_leq2q} shows that $\ov{h}^{q+j}$ is an eigenform modulo $T$ for $\wp$ precisely when $\alpha_1 = 0$. Therefore 

\begin{theorem} Let $\wp = T^d + \cdots + \alpha_1 T + \alpha_0$, $\alpha_i \in \F_q$, be a monic irreducible,  $\wp \neq T$. We have $\alpha_1 = 0$ if and only if $h^{q + j}$ ($j = 1, \ldots, p-1$) is an eigenform for $\TT_\wp$.
\end{theorem}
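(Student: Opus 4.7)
I would deduce both directions from Theorem~\ref{Hecke_action_on_h_leq2q} by working in the $K$-vector space $\DMF{(q+j)(q+1)}{j+1}$. For $1 \le j \le q-2$ this space is two-dimensional with basis $\{g^{q+1} h^{j+1},\, h^{q+j}\}$ (the case $j = q-1$ needs a minor modification with one extra basis vector coming from the $g$-tower), and hence $h^{q+j}$ is an eigenform for $\TT_\wp$ exactly when the coefficient $c_1$ in the expansion $\TT_\wp h^{q+j} = c_1 g^{q+1} h^{j+1} + c_2 h^{q+j}$ vanishes.

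For the ``$\Leftarrow$'' direction, suppose $h^{q+j}$ is an eigenform with eigenvalue $\lambda$. Since $h^{q+j} \in M_T$ and $\TT_\wp$ preserves $T$-integrality (for $\wp \ne T$), the reduction $\ov{h}^{q+j}$ is an eigenform of $\TT_\wp$ on $\DMFmodT{(q+j)(q+1)}{j+1}$. Theorem~\ref{Hecke_action_on_h_leq2q} gives $\TT_\wp \ov{h}^{q+j} \equiv \owp^j \alpha_1 \ov{h}^{j+1} + \owp^{j+1} \ov{h}^{q+j} \bmod T$, so the linear independence of $\ov{h}^{j+1}$ and $\ov{h}^{q+j}$ from~\eqref{basis_for_M_k_m_mod_T} forces $\owp^j \alpha_1 = 0$; as $\wp \ne T$ gives $\owp = \wp(0) \ne 0$ and $\alpha_1 \in \F_q$, we conclude $\alpha_1 = 0$. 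For the ``$\Rightarrow$'' direction, Theorem~\ref{Hecke_action_on_h_leq2q} delivers $c_1 \equiv 0 \bmod T$ as soon as $\alpha_1 = 0$, and the task becomes promoting this congruence to the exact equality $c_1 = 0$. I would compute $c_1$ directly as the $t^{j+1}$-coefficient of $\TT_\wp h^{q+j}$ (equal to $c_1$ because $g^{q+1} h^{j+1} = t^{j+1} + O(t^{j+2})$ while $h^{q+j} = O(t^{q+j})$) using formula~\eqref{action_on_t_exp}, the exact (non-reduced) form of Gekeler's recursion for the Goss polynomials $G_{n,\wp}(X)$ underlying~\eqref{Goss_poly_mod_T}, and the vanishing $a_{q^i+1}(q+1) = 0$ for $i \ge 2$ from the lemma preceding Theorem~\ref{Hecke_action_on_h_leq2q}.

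The main obstacle is exactly this last exact-versus-mod-$T$ step: Theorem~\ref{Hecke_action_on_h_leq2q} and formula~\eqref{Goss_poly_mod_T} only track leading-order behavior, so producing $c_1 = 0$ from $c_1 \equiv 0 \bmod T$ requires finer control of the higher-order contributions to $G_{n,\wp}(X)$ coming from the non-leading coefficients $e_i$ of the $\wp$-torsion exponential $e_\wp(z) = \wp^{-1}\phi_\wp(z)$. Should a clean algebraic simplification showing $c_1$ is a unit multiple of $\alpha_1$ not emerge directly, I would supplement the argument by invoking Remark~\ref{other_primes} to reduce $c_1$ modulo each degree-one prime $(T - \theta)$ for $\theta \in \F_q$, hoping to pin down $c_1$ via the resulting $q$ simultaneous congruence conditions.
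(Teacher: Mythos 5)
Your ``$\Leftarrow$'' direction (eigenform implies $\alpha_1=0$) is correct and is essentially the paper's argument: reduce modulo $T$, invoke Theorem~\ref{Hecke_action_on_h_leq2q}, and use the linear independence of $\ov{h}^{j+1}$ and $\ov{h}^{q+j}$ coming from \eqref{basis_for_M_k_m_mod_T}. Framing the question as the vanishing of the coefficient $c_1$ in $\TT_\wp h^{q+j}=c_1 g^{q+1}h^{j+1}+c_2h^{q+j}$ is also the right move (minor slip: the space is already three-dimensional for $j=q-2$, not only for $j=q-1$, since $g^{2(q+1)}$ enters; this only adds one more coefficient to control and does not affect the structure of the argument).

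The ``$\Rightarrow$'' direction is where your proposal stops being a proof: you correctly observe that Theorem~\ref{Hecke_action_on_h_leq2q} only yields $c_1\equiv 0\bmod T$, you name the promotion to the exact equality $c_1=0$ as the main obstacle, and you do not overcome it --- a congruence modulo a single prime of $A$ cannot by itself force an element of $K$ to vanish. For comparison, the paper supplies no argument here either: it passes directly from ``$\ov{h}^{q+j}$ is an eigenform mod $T$ precisely when $\alpha_1=0$'' to the theorem, silently using the implication ``eigenform mod $T$ $\Rightarrow$ eigenform,'' which is exactly the step you balk at; so you have put your finger on a genuine weak point rather than missed a hidden idea. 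Be warned, however, that your proposed fallback does not close the gap: running the same computation modulo $(T-\theta)$ for each $\theta\in\F_q$ (as Remark~\ref{other_primes} permits) replaces $\alpha_0$ by $\wp(\theta)$ and $\alpha_1$ by the linear Taylor coefficient $\wp'(\theta)$, giving $c_1\equiv \wp(\theta)^j\,\wp'(\theta)\bmod (T-\theta)$. This only pins down $c_1$ modulo $T^q-T$, and it shows that $c_1=0$ would force $\wp'$ to vanish at \emph{every} $\theta\in\F_q$ --- a condition strictly stronger than $\alpha_1=\wp'(0)=0$. So this route, far from completing the forward implication, indicates that an exact characteristic-zero computation of $c_1$ (or some other input entirely) is required, and that the forward direction deserves more scrutiny than either your proposal or the paper gives it.
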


In the next subsection we present several examples which show that in the case of Drinfeld modular forms Hecke operators can have inseparable minimal polynomials.

\subsection{Hochschild Cohomology and a Non-Smoothness Conjecture}

In this subsection we recall some of the basics of Hochschild cohomology and state a non-smoothness conjecture, Conjecture \ref{main_conjecture}, about $\He{k}{m}$ for some $k \gg 0$ and some $m$. Our reference for Hochschild cohomology is \cite[Ch~9]{Weibel}.

Recall that a finite-dimensional semisimple algebra $R$ over a field $K$ is called \emph{separable} if $R_L = R \otimes L$ is semisimple for every extension $L/K$. If $M$ is an $R$-bimodule, then we let $H^\ast(R, M)$ denote the Hochschild cohomology of $R$ with coefficients in~$M$ (see \cite[Sec~9.1.1]{Weibel} for the precise definitions). The separability of a finite-dimensional algebra~$R$ turns out (see \cite[Thm~9.2.11]{Weibel}) to be equivalent to the vanishing of $H^\ast (R, M)$ for all $M$, $\ast \neq 0$. The equivalence classes of commutative $K$-algebra extensions of $R$ by $M$ are in $1-1$ correspondence with elements of $H^2 (R, M)$.  We say that a  $K$-algebra $R$ is \emph{smooth} if and only if $H^2 (R, M) = 0$ for all $R$-modules $M$ (see \cite[Pr.~9.3.4]{Weibel}). 

Now take $K = \F_q(T)$, $R = \He{k}{m}$, $M = \DMF{k}{m}$. The $K$-algebra $\He{k}{m}$ is commutative and therefore we make $\DMF{k}{m}$ into an $\He{k}{m}$-bimodule by declaring that the Hecke operators act on the right by their usual action on the left, i.e., \[ f \TT := \TT f \qquad \qquad \text{ for $f \in \DMF{k}{m}$, $\TT \in \He{k}{m}$. } \]

The $K$-algebra $\He{k}{m}$ is not separable in general as the following examples show. 

\begin{example} \label{example1} Let $q = 2$. Since the type is determined modulo $(q-1)$, we know that $m = 0$. The following table shows the first weights $k$ for which the minimal polynomial of $\TT_\wp$ is not separable for all $\wp$ of degree $\leq 5$:
\begin{center}
\begin{tabular} {| c | c | c | c | c | c | c | c| c| c| c| c | c| c|c |c| c|} \hline
 $k$ & 9 & 13 &  15 & 16 & 17 & 18 & 19 & 21 & 23 & 24 & 25 & 26 & 27 & 28 \\ \hline
 $\dim_K \DMF{k}{0}$ & 4& 5 & 6 & 6 & 6 & 7 & 7 & 8 & 8 & 9 & 9 & 9 & 10 & 10 \\ \hline
\end{tabular}
\end{center}
We note that for $k \geq 3$ $\DMF{k}{0}$ has two one-dimensional Hecke invariant subspaces (the space of Eisenstein series and the space of single-cuspidal forms), therefore the space $\DMF{9}{0}$ is actually the first space for which the Hecke action can be inseparable. 
\end{example}

\begin{example} \label{example2} If we consider modular forms of higher level, then there are examples of inseparability of the Hecke algebra even for weight $2$. Indeed \cite[(9.7.4)]{Gek96} shows that if $q = 2$, $\mathfrak{n} = T (T^2 + T + 1)$, then the Hecke action on $M_{2, 1} (\Gamma_0 (\mathfrak{n}))$ is not semisimple, therefore not separable. We wish to thank Ernst-Ulrich Gekeler for bringing our attention to this example.
\end{example}

\begin{example} \label{example3}
We also have examples of inseparability for level $T$ when $q \neq 2$. For instance, \cite[Prop.~19]{MeeLi} shows that for  $q \geq 3$ and $\wp$ a degree one prime, $\wp \neq T$, the Hecke action of $\TT_\wp$ on $M_{3, m} (\Gamma(T))$ is not diagonalizable for all types $m$. Consequently, the Hecke algebra is not semisimple and therefore not separable. We want to thank Ralf Butenuth for bringing our attention to this example.
\end{example}

Because of such examples we conjecture that:

\begin{conj} \label{main_conjecture}
Given $q$ there exist $k \gg 0$ and $m$, $0 \leq m < q$, $k \equiv 2 m \bmod (q-1)$, such that
\[
	H^2 (\He{k}{m}, \DMF{k}{m}) \neq 0.
\]
\end{conj}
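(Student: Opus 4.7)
The plan is to leverage the explicit inseparability examples from the previous subsection. Fix $q$. For $q=2$, Example~\ref{example1} already produces $(k,m)$ at full level for which some $\TT_\wp$ acting on $\DMF{k}{m}$ has an inseparable minimal polynomial over $K=\F_q(T)$; for general $q$ one would first need to extend such examples to full level (Examples~\ref{example2}--\ref{example3} only supply inseparability at higher level), and then promote them to arbitrarily large weight by multiplication by suitable powers of $h$, using Theorems~\ref{main} and \ref{Hecke_action_on_h_leq2q} to track how these multiplications interact with the Hecke action modulo $T$ and lifting the conclusion back to $K$. Once such $(k,m)$ is in hand, $\He{k}{m}$ cannot be a product of separable field extensions of $K$, so it is not smooth over $K$.

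Next, decompose $\He{k}{m}\isom\prod_\mathfrak{m} R_\mathfrak{m}$ as a finite product of local Artinian $K$-algebras, with a compatible decomposition $\DMF{k}{m}\isom\bigoplus_\mathfrak{m} M_\mathfrak{m}$ of $\He{k}{m}$-bimodules. Hochschild cohomology splits correspondingly,
\[
	H^2(\He{k}{m},\DMF{k}{m}) \isom \bigoplus_\mathfrak{m} H^2(R_\mathfrak{m},M_\mathfrak{m}),
\]
and the inseparable minimal polynomial forces at least one factor $R_{\mathfrak{m}_0}$ to be non-smooth over $K$: either it carries a non-zero nilpotent or its residue field $L_{\mathfrak{m}_0}/K$ is a non-trivial purely inseparable extension. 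Because $\He{k}{m}$ acts faithfully on $\DMF{k}{m}$, the component $M_{\mathfrak{m}_0}$ is non-zero, and Nakayama provides a surjection onto a non-zero $L_{\mathfrak{m}_0}$-vector space.

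It then remains to show $H^2(R_{\mathfrak{m}_0},M_{\mathfrak{m}_0})\neq 0$. The natural route is to produce an explicit obstruction: a $K$-derivation $D\colon R_{\mathfrak{m}_0}\to L_{\mathfrak{m}_0}$ arising from $\Omega^1_{L_{\mathfrak{m}_0}/K}\neq 0$ (or, in the non-reduced case, from a nilpotent direction) that fails to lift along a carefully chosen square-zero extension of $R_{\mathfrak{m}_0}$ by $M_{\mathfrak{m}_0}$, thereby producing a non-trivial class in the bar complex computing $H^2(R_{\mathfrak{m}_0},M_{\mathfrak{m}_0})$. The \emph{main obstacle} is precisely this last step: non-smoothness only guarantees non-vanishing of $H^2(R,-)$ on \emph{some} module, not on the specific bimodule at hand, and one would need to run a long-exact-sequence argument along the $\mathfrak{m}_0$-adic filtration of $M_{\mathfrak{m}_0}$ (or directly pull a non-zero class back from $H^2(R_{\mathfrak{m}_0},L_{\mathfrak{m}_0})$ along a surjection $M_{\mathfrak{m}_0}\twoheadrightarrow L_{\mathfrak{m}_0}$) to complete the passage. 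This pullback step, together with the construction of large-weight inseparable examples for arbitrary $q$, is where I expect the real work to lie.
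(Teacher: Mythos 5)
The statement you are trying to prove is labelled a \emph{conjecture} in the paper, and the paper offers no proof of it: the authors only supply evidence (Examples~\ref{example1}--\ref{example3}, which exhibit inseparable or non-semisimple Hecke actions) and explicitly remark that the conjecture is \emph{strictly stronger} than non-smoothness of $\He{k}{m}$, precisely because non-smoothness only guarantees $H^2(\He{k}{m}, M)\neq 0$ for \emph{some} module $M$, whereas the conjecture demands non-vanishing on the particular bimodule $\DMF{k}{m}$. So there is no ``paper proof'' to compare against, and your proposal should be judged as an attempt at an open problem.

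As such an attempt, it has two genuine gaps beyond the one you flag. First, for $q>2$ you have no full-level inseparability examples to start from: Examples~\ref{example2} and~\ref{example3} live at level $\mathfrak{n}$ and level $T$ respectively, and ``promoting them to full level and to large weight by multiplying by powers of $h$'' is not a construction the paper's Theorems~\ref{main} and~\ref{Hecke_action_on_h_leq2q} support --- those results compute the Hecke action modulo $T$ on powers of $h$ at full level; they say nothing about transporting inseparability from $\Gamma_0(\mathfrak{n})$ or $\Gamma(T)$ down to $\GL_2(A)$, nor about how the minimal polynomial over $K$ behaves under multiplication by $h^j$. Second, and more seriously, your final step is directionally wrong: Hochschild cohomology $H^2(R,-)$ is covariant in the coefficient module, so a surjection $M_{\mathfrak{m}_0}\twoheadrightarrow L_{\mathfrak{m}_0}$ induces a map $H^2(R_{\mathfrak{m}_0},M_{\mathfrak{m}_0})\to H^2(R_{\mathfrak{m}_0},L_{\mathfrak{m}_0})$, not a way to ``pull a non-zero class back'' into $H^2(R_{\mathfrak{m}_0},M_{\mathfrak{m}_0})$. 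Running the long exact sequence along the $\mathfrak{m}_0$-adic filtration instead produces boundary terms in $H^3$ that you have no control over, so non-vanishing on the residue field does not propagate to $M_{\mathfrak{m}_0}$ by any formal argument. This is exactly the obstruction the authors had in mind when they stated the result as a conjecture rather than a theorem, and your proposal does not overcome it.
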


In particular, the conjecture implies that $\He{k}{m}$ is not smooth for some $k \gg 0$ and some $m$.

\begin{remark} The conjecture is stronger than non-smoothness. Indeed, non-smoothness is equivalent to $H^2( \He{k}{m}, M) \neq 0$ for some $\He{k}{m}$-module $M$, while we conjecture that a natural module $\DMF{k}{m}$ afforded by our situation already detects this. 
\end{remark}

\begin{remark} Classically the self-adjointness of the Hecke operators under the Petersson inner product shows that the Hecke algebra $H_k$ is semisimple. Since $H_k$ is commutative, it is a product of fields. These fields are separable because we are in characteristic $0$. This shows that the algebra $H_k$ is \'{e}tale over $\Q$.  Since \'{e}tale implies smooth, we see that this shows another structural difference between the classical case and the case of Drinfeld modular forms.  
\end{remark}

\section{Acknowledgements}

In the preparation of this article we made extensive use of the computer packages SAGE (\cite{Sage}), PARI/GP (\cite{Pari}) and SINGULAR (\cite{Singular}), as well as the computational facilities at the University of Arizona and Texas A\&M University at Qatar. We wish to thank Matthew Johnson for his help with the PARI/GP computations. 

We want to thank Ralf Butenuth, Ernst-Ulrich Gekeler, David Goss, Matthew Johnson and Dinesh Thakur for their valuable comments and suggestions during the writing of the present work.

\bibliographystyle{amsplain}
\bibliography{references}

\end{document}